\def\wlog#1{}
 \def\@latex@info#1{}
 \def\@font@info#1{}
\newcommand{\conservepaper}{
\setlength{\textwidth}{6.5 in}
\setlength{\textheight}{9.0 in}
\hoffset=-0.75in
\voffset=-0.5in
 }
\newtheorem{mythm}{Theorem}
\newtheorem{mylem}[mythm]{Lemma}
\newtheorem{mycor}[mythm]{Corollary}
\theoremstyle{definition}
\newtheorem{mydef}[mythm]{Definition}
\theoremstyle{remark}
\newtheorem*{myrem}{Remark}
 \def\l@subsection{\@tocline{2}{0pt}{4pc}{5pc}{}}
\begin{document}

\title[A simplification of ``Effective quasimorphisms on right-angled Artin groups'']{A simplification of ``Effective quasimorphisms on right-angled Artin groups''}

\author{Philip Föhn}

 \address
 {Department of Mathematics,
 ETH Zürich}

\email{foehnp@protonmail.com}

\date{\today}

\begin{abstract}
This paper presents a simplification of the main argument in ``Effective quasimorphisms on right-angled Artin groups" by Fernós, Forester and Tao.\par
Their article introduces a family of quasimorphisms on a certain class of groups (called RAAG-like) acting on CAT(0) cube complexes. They show that these have uniform defect of $12$ and take the value of at least $1$ on a chosen element. With the Bavard Duality they conclude that hyperbolic elements of RAAG-like groups have stable commutator length of at least $1/24$.\par
Their proof that the quasimorphisms take the value $1$ is quite technical and relies on some tools they introduce: the essential characteristic set and equivariant Euclidean embeddings. Here it is shown that this is unnecessary and a much shorter proof is given.\par
This was originally part of my master thesis ``Stable commutator length gap in RAAG-like groups'' supervised by Alessandra Iozzi at ETH Zürich.
\end{abstract}
\maketitle

\tableofcontents

\section{Introduction}
In \cite{FFT16}, a special class of group actions on CAT(0) cube complexes is defined by the behaviour of halfspaces under the action (see Definition \ref{RAAGlike} in this paper). Since they generalise right-angled Artin groups (RAAGs) acting on the associated RAAG-complex (the universal cover of the associated Salvetti complex), they are called \emph{RAAG-like} actions.\par
The authors define quasimorphisms on all groups acting non-transversally \footnote{A group acts \emph{non-transversally}, if orbits of halfspaces are nested. In particular, RAAG-like actions are non-transverse.} on CAT(0) cube complexes, generalising Fujiwara--Epstein counting quasimorphisms on free groups (introduced in \cite{EF97}). The quasimorphisms in \cite{FFT16} are shown to have defect of at most $6$ and, hence, their homogenisation at most $12$. This is done using the median property of CAT(0) spaces.\par
The authors then prove \emph{effectiveness} of these quasimorphisms, that is, for every element $g$ acting hyperbolically on the CAT(0) cube complex, one of their homogeneous quasimorphisms $\overline{\phi}$ satisfies $\overline{\phi}(g)\geq 1$. For this, using Haglunds combinatorial axis (\cite{Hag07}), they isolate a subcomplex with respect to $g$ called `essential set' and construct a $g$-equivariant embedding of this into a Euclidean space. They then use a rather intricate series of technical lemmas to prove that $\overline{\phi}(g)\geq 1$.\par
The purpose of this paper is to show that these technical arguments can be avoided and replaced by a short proof that also uses the full properties of RAAG-like actions.\par
Finally, we conclude as in \cite{FFT16} that hyperbolic elements of RAAG-like actions have stable commutator length at least $1/24$ using the Bavard Duality. In particular, RAAGs have a stable commutator length gap of $1/24$. Note that, using a different class of quasimorphisms, Heuer \cite{Heu19} has already proved a better (and optimal) bound of $1/2$ for the stable commutator length in RAAGs.

\section{Premliminaries}
\subsection{Halfspaces in CAT(0) cube complexes}
Let $X$ be a CAT(0) cube complex. Denote by $\mathcal{H}(X)$ the set of half-spaces and by $\overline{\Phi}$ the complement of a halfspace $\Phi$. Two halfspaces $\Phi,\Psi \in \mathcal{H}$ are said to be \emph{nested}, if either $\Phi\subseteq \Psi$, $\overline{\Phi} \subseteq \Psi$, $\Phi\subseteq \overline{\Psi}$ or $\overline{\Phi}\subseteq \overline\Psi$. Otherwise, they are \emph{transverse}. Two distinct $\Phi,\Psi \in \mathcal{H}$ are \emph{tightly nested}, if they are nested and there is no $\Phi'\in \mathcal{H}$ with $\Phi\subsetneq \Phi' \subsetneq \Psi$ or $\overline{\Phi}\subsetneq \Phi' \subsetneq \Phi$ etc..\par
For every oriented edge $E$ in $X$, there is exactly one $\Phi\in\mathcal{H}$ such that the beginning vertex of $E$ is in $\Phi$ and the end vertex is in $\overline{\Phi}$. We say that $\Phi$ and $E$ are \emph{dual} to each other.\par

Given $x,y\in \mathcal{H}$ the \emph{interval} between $x$ and $y$ is 
$$[x,y]:= \left\{\Phi\in\mathcal{H}: x\notin\Phi \textrm{ and } y\in\Phi \right\}.$$
Given vertices $x,y,z\in X$, there is a unique vertex $m(x,y,z)$ called \emph{median} with the property $[a,b]=[a,m]\cup[m,b]$ for any distinct pair $a,b$ in ${x,y,z}$ (see Preliminaries in \cite{FFT16} for a simple proof).

\subsection{Haglund's combinatorial axis}
Let $X$ be a CAT(0) cube complex. We can introduce a metric called \emph{combinatorial distance} $d^c$ on the set of vertices $X^{(0)}$, by defining $d^c(x,y)$ as the minimal number of edges in an edge path from $x$ to $y$. A \emph{combinatorial geodesic} is an optimal (with respect to $d^c$) oriented edge path.\par

The \emph{translation distance} of an automorphism $g$ of $X$ is the natural number
$$\delta(g)=\min_{x\in X^{(0)}} d^c(x,gx).$$
An automorphism $g$ of a CAT(0) cube complex $X$ is \emph{hyperbolic}, if it fixes no vertex of $X$, i.e. $\delta(g)>0$. Otherwise, $g$ is called \emph{elliptic}. A combinatorial axis is an infinite combinatorial geodesic on which $g$ acts as a shift. According to Haglund in \cite{Hag07}, if $g$ is a hyperbolic automorphism all of whose powers act without inversion (that is, there are no $\Phi\in\mathcal{H}$ and $n\in\mathbb{Z}$ with $g^n\Phi=\overline{\Phi}$), then every vertex in $X^{(0)}$ on which $g$ attains its translation distance is contained in some combinatorial axis.\par

As in \cite{FFT16}, let $A^+_g$ denote all halfspaces dual to an oriented edge in some combinatorial axis (indeed, a halfspace dual to some combinatorial axis is also dual to all other ones according to \cite{Hag07}). Clearly, $A^+_g = \bigcup_{n\in \mathbb{N}} [g^no,g^{n+1}o]$ for any vertex $o$ where $g$ attains its translation distance. An important fact is that for $\Phi,\Psi \in A^+_g$ either $\Phi\subseteq \Psi$, $\Phi\supseteq \Psi$ or $\Phi$ and $\Psi$ are transverse, which follows because a combinatorial geodesic may never leave a halfspace after entering.

\subsection{The Bavard Duality}
Given a group $G$, the \emph{commutator length} $\textrm{cl}$ is a function $\textrm{cl}: [G,G] \rightarrow \mathbb{N}$, where $[G,G]$ is the commutator subgroup. For $g\in[G,G]$, it is defined as the minimal number of commutators whose product is $g$. The \emph{stable commutator length} is the well defined limit 
$$\textrm{scl}(g)= \lim_{n\to \infty} \frac{\textrm{cl}(g^n)}{n}.$$\par
Bounds of $\textrm{scl}$ can be estimated using \emph{homogeneous quasimorphisms} and the \emph{Bavard Duality}. A quasimorphism is a function $\phi: G\rightarrow \mathbb{R}$ with a bounded \emph{defect} 
$$D(\phi):= \sup_{g,h\in G} |\phi(gh)-\phi(g)-\phi(h)|.$$ 
The quasimorphism $\phi$ is called homogeneous, if $\phi(g^n)= n\phi(g)$ for all $g$ and $n\in \mathbb{Z}$. Denote by $Q(G)$ the space of homogeneous quasimorphisms on $G$. Every quasimorphism yields a homogeneous quasimorphism called \emph{homogenisation} $\overline{\phi}(g):= \lim_{n\to\infty} \frac{\phi(g^n)}{n}$ with the following property:
\begin{mylem}
Let $\phi$ be a quasimorphism. Then its homogenisation satisfies $D(\overline{\phi})\leq 2D(\phi)$.
\end{mylem}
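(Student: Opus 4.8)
The plan is to run the classical three-step argument. Write $D=D(\phi)$.

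\emph{Step 1 (a power estimate).} First I would prove by induction on $n\ge 1$ that $|\phi(g^{n})-n\phi(g)|\le (n-1)D$ for every $g\in G$. The case $n=1$ is trivial, and the inductive step is just the defect inequality for the pair $(g^{n},g)$, namely $|\phi(g^{n+1})-\phi(g^{n})-\phi(g)|\le D$. Dividing by $n$ and letting $n\to\infty$ gives $|\overline{\phi}(g)-\phi(g)|\le D$; together with the quasi-additivity $|\phi(g^{m+n})-\phi(g^{m})-\phi(g^{n})|\le D$ this also re-establishes that the defining limit exists. Homogeneity, $\overline{\phi}(g^{m})=m\overline{\phi}(g)$ for all $m\in\mathbb{Z}$, is then immediate (for $m<0$ use $|\phi(g^{n})+\phi(g^{-n})|\le 2D$, which follows from the defect inequality and $|\phi(e)|\le D$).

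\emph{Step 2 (reduction to a linear estimate).} Fix $a,b\in G$. By homogeneity $\overline{\phi}(ab)=\tfrac1n\overline{\phi}((ab)^{n})$, $\overline{\phi}(a)=\tfrac1n\overline{\phi}(a^{n})$ and $\overline{\phi}(b)=\tfrac1n\overline{\phi}(b^{n})$, whence
$$\overline{\phi}(ab)-\overline{\phi}(a)-\overline{\phi}(b)=\lim_{n\to\infty}\frac1n\Bigl(\phi\bigl((ab)^{n}\bigr)-\phi(a^{n})-\phi(b^{n})\Bigr).$$
So it suffices to bound $\bigl|\phi((ab)^{n})-\phi(a^{n})-\phi(b^{n})\bigr|$ by $2nD+C$ with $C$ independent of $n$: dividing by $n$ and taking the limit then gives $D(\overline{\phi})\le 2D$.

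\emph{Step 3 (the linear estimate).} Using the identity $(ab)^{n}=a\,(ba)^{n-1}\,b$ I would peel a copy of $a$ off the front and a copy of $b$ off the back of $(ab)^{n}$, and in parallel strip the leading letter from $a^{n}=a\cdot a^{n-1}$ and from $b^{n}=b\cdot b^{n-1}$. Each use of the defect inequality contributes a controlled error, and the leftovers reassemble into the same quantity for $(ba)^{n-1},a^{n-1},b^{n-1}$, yielding a recursion
$$\bigl|\phi((ab)^{n})-\phi(a^{n})-\phi(b^{n})\bigr|\le\bigl|\phi((ba)^{n-1})-\phi(a^{n-1})-\phi(b^{n-1})\bigr|+(\text{bounded}),$$
which unwinds to the bound required in Step 2.

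\emph{The main obstacle.} The genuinely delicate point is squeezing the constant to exactly $2$. The lazy bookkeeping---expand $(ab)^{n}$ into a word of length $2n$, descend all the way to $n\phi(a)+n\phi(b)$, and separately compare $n\phi(a)+n\phi(b)$ with $\phi(a^{n})+\phi(b^{n})$ via the power estimate---pays the ``defect price'' twice and only gives $\bigl|\phi((ab)^{n})-\phi(a^{n})-\phi(b^{n})\bigr|\le(4n-3)D$, hence the weaker $D(\overline{\phi})\le 4D$. To reach $2D$ one must compare $(ab)^{n}$ directly with $a^{n}$ and $b^{n}$ in a single pass, never routing through the intermediate values $n\phi(a),n\phi(b)$; for instance via the factorisation $(ab)^{n}=a^{n}\bigl(a^{-(n-1)}(ba)^{n-1}b^{-(n-1)}\bigr)b^{n}$ and a direct estimate on the middle factor, or by arranging the recursion of Step 3 so that each step costs only $2D$ rather than $4D$. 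Keeping this bookkeeping tight is where I expect the real work to lie.
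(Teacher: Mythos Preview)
The paper does not prove this lemma at all; it is stated as a standard fact and the reader is referred to \cite{Cal09} for details. So there is no ``paper's proof'' to match, and the question is simply whether your argument is complete.

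Your Steps 1 and 2 are correct and standard. The problem is Step 3: the recursion you describe does not give the sharp constant. Peeling one $a$ and one $b$ from $(ab)^{n}=a(ba)^{n-1}b$ costs $2D$, but stripping the leading letters from $a^{n}$ and $b^{n}$ costs an additional $D$ each, so one pass of your recursion costs $4D$, and unwinding gives only $\bigl|\phi((ab)^{n})-\phi(a^{n})-\phi(b^{n})\bigr|\le 4nD+O(1)$, i.e.\ $D(\overline{\phi})\le 4D$. You acknowledge this in your ``main obstacle'' paragraph, but neither of the fixes you propose is carried out, and the factorisation $(ab)^{n}=a^{n}\bigl(a^{-(n-1)}(ba)^{n-1}b^{-(n-1)}\bigr)b^{n}$ does not obviously help: bounding $\phi$ on the middle factor by expanding it as a word again pays the defect price too many times.

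The missing idea is conjugacy invariance of $\overline{\phi}$, which follows immediately from Step~1 since $(xgx^{-1})^{n}=xg^{n}x^{-1}$. Using the identity $(gh)^{n}=\bigl(\prod_{i=0}^{n-1}h^{i}gh^{-i}\bigr)h^{n}$ one gets
\[
\bigl|\phi((gh)^{n})-\textstyle\sum_{i=0}^{n-1}\phi(h^{i}gh^{-i})-\phi(h^{n})\bigr|\le nD,
\]
and since $\overline{\phi}(h^{i}gh^{-i})=\overline{\phi}(g)$, Step~1 gives $\bigl|\phi(h^{i}gh^{-i})-\overline{\phi}(g)\bigr|\le D$ for every $i$, hence $\bigl|\sum_{i}\phi(h^{i}gh^{-i})-n\overline{\phi}(g)\bigr|\le nD$. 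Combining and dividing by $n$ yields $|\overline{\phi}(gh)-\overline{\phi}(g)-\overline{\phi}(h)|\le 2D$. The point is that one should compare the conjugates $h^{i}gh^{-i}$ directly to $\overline{\phi}(g)$ rather than to $\phi(g)$ or to each other; this is exactly what avoids the double payment you were worried about.
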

The Bavard Duality states that:
\begin{mythm}[\cite{Bav91}]\label{bavard}
For any $g\in[G,G]$
$$\mathrm{scl}(g)= \sup_{\overline{\phi}\in Q(G)} \frac{\overline{\phi}(g)}{2D(\overline{\phi}))}.$$
\end{mythm}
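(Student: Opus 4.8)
The plan is to prove the two inequalities $\mathrm{scl}(g)\ge\sup_{\overline\phi}\frac{\overline\phi(g)}{2D(\overline\phi)}$ and $\mathrm{scl}(g)\le\sup_{\overline\phi}\frac{\overline\phi(g)}{2D(\overline\phi)}$ separately, reading any term coming from a genuine homomorphism $\overline\phi\in H^1(G;\mathbb{R})$ as $0$ (such $\overline\phi$ vanish on $[G,G]$ and have zero defect; if $Q(G)$ consists only of homomorphisms, both sides are $0$).

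The first, elementary, inequality I would deduce from two consequences of homogeneity. First, a homogeneous quasimorphism $\overline\phi$ is conjugation invariant: from $\overline\phi(hg^nh^{-1})=n\,\overline\phi(hgh^{-1})$, $|\overline\phi(hg^nh^{-1})-\overline\phi(g^n)|\le 2D(\overline\phi)$ (two uses of the defect plus $\overline\phi(h^{-1})=-\overline\phi(h)$), and $\overline\phi(g^n)=n\,\overline\phi(g)$, we get $|\overline\phi(hgh^{-1})-\overline\phi(g)|\le 2D(\overline\phi)/n\to0$. Second, hence $|\overline\phi([a,b])|\le D(\overline\phi)$, by expanding $\overline\phi(aba^{-1}b^{-1})$ with the defect and using $\overline\phi(aba^{-1})=\overline\phi(b)$ and $\overline\phi(b^{-1})=-\overline\phi(b)$. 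Telescoping the defect over a product of $n$ commutators gives $|\overline\phi(\prod_{i=1}^{n}[a_i,b_i])|\le(2n-1)D(\overline\phi)$; applying this to $g^m$ with $n=\mathrm{cl}(g^m)$ yields $m\,|\overline\phi(g)|=|\overline\phi(g^m)|\le(2\,\mathrm{cl}(g^m)-1)D(\overline\phi)$, and dividing by $2m$ and letting $m\to\infty$ gives $|\overline\phi(g)|\le 2\,\mathrm{scl}(g)\,D(\overline\phi)$. Since $-\overline\phi\in Q(G)$, the supremum over $Q(G)$ is $\le\mathrm{scl}(g)$.

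The reverse inequality I would obtain by linear-programming duality. Let $C_*(G)$ be the bar complex with real coefficients, $B_1(G)=\partial C_2(G)\subseteq C_1(G)$ the space of real $1$-boundaries, and $B_1^H(G)$ its quotient by the subspace spanned by all $g-hgh^{-1}$ and $g^n-n\,g$. A standard cut-and-paste argument for surfaces shows that $\mathrm{scl}$ descends to a well-defined seminorm $\|\cdot\|$ on $B_1^H(G)$ with $\|g\|=\mathrm{scl}(g)$ for $g\in[G,G]$ (here $g\in[G,G]$ forces $[g]=0$ in $H_1$, so $g\in B_1(G)$). The structural input is that the norm-dual of $(B_1^H(G),\|\cdot\|)$ is exactly $Q(G)/H^1(G;\mathbb{R})$ with twice the defect norm: a homogeneous quasimorphism pairs with a $1$-boundary by $\overline\phi(\sum t_i g_i)=\sum t_i\overline\phi(g_i)$, this functional has dual norm at most $2D(\overline\phi)$ by the commutator estimate above, and --- the hard part --- every $\|\cdot\|$-bounded functional arises this way with a matching defect bound. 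Granting this, $\mathrm{scl}(g)=\|g\|=\sup\{f(g):\|f\|^*\le1\}$ is just the Hahn--Banach description of a seminorm by its dual unit ball, which rewritten in terms of quasimorphisms is the asserted identity.

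The main obstacle is the ``hard part'' just flagged. One half of it is cheap: a $\|\cdot\|$-bounded functional $f$ on $B_1(G)$ pulls back under $\partial$ to a bounded $2$-cocycle $f\circ\partial$, which is the coboundary of any linear extension $\phi\colon C_1(G)\to\mathbb{R}$ of $f$ (Hahn--Banach, norms ignored); so $\phi$, read as a function $G\to\mathbb{R}$, is a quasimorphism, its homogenisation $\overline\phi$ satisfies $\overline\phi(g)=f([g])$ for $g\in[G,G]$, and $D(\overline\phi)\le 2D(\phi)$ by the homogenisation lemma above. The genuinely delicate point is sharpening this so that $2D(\overline\phi)\le\|f\|^*$, i.e.\ that $\mathrm{scl}$ on $B_1^H(G)$ is dual to the defect norm and not merely comparable to it; this is exactly the filling-norm duality, and the once-punctured torus filling a single commutator (with $-\chi=1$) is where the factor $2$ is pinned down. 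I only indicate the architecture here; a self-contained treatment of this duality is \cite{Bav91}, and a modern account making the functional-analytic framework explicit is Calegari's book \emph{scl}.
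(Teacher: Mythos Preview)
The paper does not prove this theorem at all: it is stated with a citation to \cite{Bav91} and used as a black box, with a footnote pointing to \cite{Cal09} for details. So there is no ``paper's own proof'' to compare your attempt against.

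That said, your sketch is the standard architecture of Bavard's argument and is essentially correct as an outline. The easy direction (via conjugation invariance of homogeneous quasimorphisms and the commutator bound $|\overline\phi([a,b])|\le D(\overline\phi)$) is complete. For the reverse inequality you correctly identify the key structural input --- that the $\mathrm{scl}$ seminorm on $B_1^H(G)$ is dual to $Q(G)/H^1$ with the defect norm --- and you are honest that the sharp constant (the factor $2$, coming from the once-punctured torus with $-\chi=1$) is the delicate point you are not proving in full. Since the paper itself defers to \cite{Bav91} and \cite{Cal09} for exactly this, your treatment is already more detailed than what the paper provides.
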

Therefore, to prove that $\textrm{scl}(g)$ is bounded from below by some constant, it suffices to find a homogeneous quasimorphism which has low enough defect (we say it is \emph{efficient}) and at the same time does not vanish on $g$ (that is, it is \emph{effective}).\footnote{See \cite[Chapter 2]{Cal09} for more detail and proofs on quasimorphisms and scl.}

\section{RAAG-like actions}
Let us reproduce the definition of RAAG-like actions given in \cite[Chapter 7]{FFT16}:
\begin{mydef}\label{RAAGlike}
Let $G$ be a group acting on a CAT(0) cube complex $X$ with halfspaces $\mathcal{H}(X)$. The action is called \emph{RAAG-like} if the following are satisfied:
\begin{enumerate}[(i)]
\item There are no $\Phi\in\mathcal{H}(X)$ and $h\in G$ with $h\overline{\Phi} = \Phi$ (``no inversions'')
\item there are no $\Phi\in\mathcal{H}(X)$ and $h\in G$ with $\Phi$ and $h\Phi$ transverse (``non-transverse''),
\item there are no tightly nested $\Phi,\Phi'\in\mathcal{H}(X)$ and $h\in G$ with $\Phi$ and $h\Phi'$ transverse,
\item there are no $\Phi\in\mathcal{H}(X)$ and $h\in G$ with $\Phi\subset h\overline{\Phi}$ tightly.
\end{enumerate}
A group is called RAAG-like, if it has a faithful RAAG-like action on some CAT(0) cube complex.
\end{mydef}
\begin{myrem}
If $G$ acts on $X$ freely, then RAAG-likeness of the action is equivalent to $X/G$ being a \emph{A-special} (often simply called \emph{special}) cube complex in the sense of Haglund and Wise \cite[Definition 3.2]{HW08}. In particular, we have the following correspondences: 
\begin{enumerate}[(i)]
\item corresponds to all hyperplanes in $X/G$ being two-sided, 
\item corresponds to no hyperplane in $X/G$ intersecting itself,
\item corresponds to no pair of hyperplanes in $X/G$ inter-osculating and 
\item corresponds to no pair of hyperplanes in $X/G$ directly self-osculating.
\end{enumerate}
Hence $G$ is the fundamental group of an A-special cube complex and conversely the fundamental group of an A-special cube complex acts RAAG-like and freely on its universal cover. In particular, RAAGs are RAAG-like, as they are the fundamental group of an A-special cube complex (their Salvetti complex).
\end{myrem}
\begin{mylem}\label{RAAGhyperbolic}
Every non-trivial element of a RAAG-like action is hyperbolic.
\end{mylem}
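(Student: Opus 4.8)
The plan is to argue by contradiction and show that no non-trivially acting element can be elliptic. So suppose $g$ acts non-trivially and elliptically. Then the set $\mathrm{Fix}(g)$ of vertices fixed by $g$ is non-empty (by ellipticity) but a proper subset of $X^{(0)}$, and since the $1$-skeleton of $X$ is connected there is an oriented edge $E$ from a vertex $v\in\mathrm{Fix}(g)$ to a vertex $w\notin\mathrm{Fix}(g)$. Let $\Phi$ be the halfspace dual to $E$, so that $v\in\Phi$ and $w\in\overline{\Phi}$. Then $gE$ is an edge from $gv=v$ to $gw\neq w$, dual to $g\Phi$, and in particular $g\Phi$ again contains $v$. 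If $g\Phi=\Phi$ then $E$ and $gE$ would be two distinct edges at $v$ dual to the same hyperplane, which cannot happen; and $g\Phi=\overline{\Phi}$ is forbidden outright by the ``no inversions'' condition of Definition~\ref{RAAGlike}. Thus $\Phi$ and $g\Phi$ are distinct, non-complementary halfspaces both containing $v$.

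The next ingredient I would use is that a halfspace dual to an edge at $v$ is minimal, with respect to inclusion, among all halfspaces containing $v$: if $\Psi$ is dual to the edge $v\to v'$ and $v\in\Theta\subsetneq\Psi$, then $v'\in\overline{\Psi}\subseteq\overline{\Theta}$, so $\overline{\Theta}$ contains $v'$ but not $v$; since $d^c(v,v')=1$ the interval $[v,v']$ is the single halfspace $\overline{\Psi}$, forcing $\Theta=\Psi$. Now $\Phi$ and $g\Phi$ are both dual to edges at $v$, hence inclusion-minimal among halfspaces containing $v$. By the non-transversality condition of Definition~\ref{RAAGlike} they are nested; minimality rules out $\Phi\subsetneq g\Phi$ and $g\Phi\subsetneq\Phi$ (in either case one minimal halfspace would strictly contain another halfspace that still contains $v$), and $\Phi\subseteq\overline{g\Phi}$ is impossible because $v\in\Phi\cap g\Phi$. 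The only remaining possibility is $\overline{\Phi}\subsetneq g\Phi$, equivalently $\overline{g\Phi}\subsetneq\Phi$.

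The heart of the argument — and the step I expect to need the most care — is to check that this nesting is \emph{tight}. Suppose $\overline{\Phi}\subsetneq\Theta\subsetneq g\Phi$ for some halfspace $\Theta$. If $v\in\Theta$ this contradicts the minimality of $g\Phi$; so $v\notin\Theta$, while $w\in\overline{\Phi}\subseteq\Theta$, whence $\Theta$ lies in the interval $[v,w]$. But $[v,w]=\{\overline{\Phi}\}$ because $d^c(v,w)=1$, contradicting $\overline{\Phi}\subsetneq\Theta$. Therefore $\overline{\Phi}\subset g\Phi$ is a tight nesting. Applying condition (iv) of Definition~\ref{RAAGlike} with the halfspace $\overline{\Phi}$ in the role of ``$\Phi$'' and $g$ in the role of ``$h$'' — it forbids exactly the tight inclusion $\overline{\Phi}\subset g\,\overline{(\overline{\Phi})}=g\Phi$ — we obtain a contradiction. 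Hence $g$ must be hyperbolic.

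It is worth noting in advance what is and is not used: only conditions (i), (ii) and (iv) of Definition~\ref{RAAGlike} enter, and Haglund's combinatorial axis plays no role; the whole argument lives in the ``first shell'' of halfspaces dual to edges at a single fixed vertex. The only facts borrowed from the general theory of CAT(0) cube complexes are elementary — that distinct edges at a vertex are dual to distinct hyperplanes, and that two adjacent vertices are separated by exactly one hyperplane so that the interval between them is a singleton.
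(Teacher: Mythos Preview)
Your proof is correct and follows the same overall strategy as the paper: locate a fixed vertex $v$ with an incident edge $E$ that is moved, and analyse the relation between the dual halfspace $\Phi$ and its translate $g\Phi$ to contradict the RAAG-like axioms. The paper phrases the dichotomy geometrically (either $E$ and $gE$ span a square, giving transversality and contradicting (ii), or they do not, giving a tight nesting), whereas you bypass the square picture entirely by invoking (ii) directly and then using the minimality of halfspaces dual to edges at $v$ to pin down the nesting as $\overline{\Phi}\subsetneq g\Phi$ and prove it tight; both routes terminate in the same violation of axiom~(iv). Your version is in fact more explicit than the paper's --- the paper leaves the exclusion of $g\Phi=\Phi$ and $g\Phi=\overline{\Phi}$, the direction of the tight nesting, and the final appeal to~(iv) all implicit.
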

\begin{proof}
Suppose $h\in G$ is elliptic, i.e. $h$ has at least one fixed vertex, and acts non-trivially. If for some fixed vertex of $h$ in $X$, every incident edge is fixed, then all neighbouring vertices of $v$ are also fixed. Therefore, there must be some fixed vertex $v$ with an incident edge which is not fixed, or else every single vertex of $X$ would be fixed. Let $E$ be adjacent to $v$ and mapped to some other edge $F$ adjacent to $v$. If $E$ and $F$ bound a square, then $h$ is transverse, as the halfspace $\Phi$ dual to $E$ is transverse to $h\Phi$, the halfspace dual to $F$. If they do not bound a square, then $\Phi$ and $h\Phi$ are tightly nested if they are not transverse.
\end{proof}

\section{The quasimorphisms and their defect}
From now on, let $G$ be a group with a non-transverse (not necessarily RAAG-like) action on a CAT(0) cube complex $X$.\par
We recall the quasimorphisms defined in \cite[Chapter 4]{FFT16} and, for completeness, the proof that their defect is bounded by $12$.

\begin{mydef}
A \emph{segment} is a series of half-spaces $\gamma= \left\{\Phi_0,..., \Phi_r\right\}$ such that $\Phi_i\supsetneq \Phi_{i+1}$ \emph{tightly} for $0\leq i<r$.\\
The \emph{reverse} segment of $\gamma$ is $\overline{\gamma}= \left\{\overline{\Phi}_r,..., \overline{\Phi}_0\right\}$.
\end{mydef}
\begin{mydef}
Two segments $\gamma_1, \gamma_2\in \mathcal{H}(X)$ are said to \emph{overlap}, if there is $\Phi_1\in \gamma_1$ and $\Phi_2\in \gamma_2$ that are equal or transverse. A set of segments is called \emph{non-overlapping}, if no two of its segments overlap.
\end{mydef}
\begin{myrem}
If $\gamma= \left\{\Phi_0,..., \Phi_r\right\}$ and $\gamma'= \left\{\Psi_0,..., \Psi_s\right\}$ are non-overlapping segments, then one of the following holds:
\begin{itemize}
\item[] $\Phi_0\supset...\supset \Phi_r\supset \Psi_0\supset... \supset\Psi_s$
\item[] $\Psi_0\supset...\supset \Psi_s\supset \Phi_0\supset... \supset\Phi_r$
\item[] $\overline{\Phi}_r\supset...\supset \overline{\Phi}_0\supset \Psi_0\supset... \supset\Psi_s$
\item[] $\Psi_0\supset...\supset \Psi_s\supset \overline{\Phi}_r\supset...\supset \overline{\Phi}_0$
\end{itemize}
Otherwise, it can easily be seen, that tight nestedness of the segments is contradicted. We write $\gamma>\gamma'$, $\gamma'>\gamma$, $\overline{\gamma}> \gamma'$ and $\gamma'> \overline{\gamma}$ respectively in these cases.
\end{myrem}
\begin{myrem}
If $S$ is a set of non-overlapping segments, then for any $\gamma_1, \gamma_2\in S$ either $\gamma_1> \gamma_2$ or $\gamma_2> \gamma_1$. Thus, if $S$ is finite, it must contain a maximal segment that contains every other segment in $S$, and a minimal segment that is contained by every other segment in $S$, respectively.
\end{myrem}
\begin{mydef}
Given a segment $\gamma$, let $G\gamma = \{g\gamma: g\in G\}$ denote the set of \emph{copies of $\gamma$}. The function $c_{\gamma}: X^2\rightarrow \mathbb{R}$ is defined sucht that $c_{\gamma}(x,y)$ is the cardinality of the largest non-overlapping subset of $G\gamma$ in $[x,y]$.\par
Furthermore, define $\omega_{\gamma}:X^2\rightarrow \mathbb{R}$ by $\omega_{\gamma}(x,y):= c_{\gamma}(x,y) - c_{\bar{\gamma}}(x,y)$.
\end{mydef}
\begin{myrem}
$\omega_{\gamma}(\cdot,\cdot)$ is $G$-invariant, i.e. $\omega_{\gamma}(x,y)= \omega_{\gamma}(gx,gy)$ for any $x,y\in X$ and $g\in G$, since any non-overlapping subset of $G\gamma$ in $[x,y]$ can be pushed by $g$ to one in $[gx,gy]$, and vice versa.\par
Furthermore, $\omega_{\gamma}(\cdot,\cdot)$ is antisymmetric, since if $g\gamma\in [x,y]$, then $g\overline{\gamma}\in [y,x]$, and vice versa.
\end{myrem}
The following lemmas show that $\omega_{\gamma}(o,go)$ as a function of $g$ (where $o$ is any vertex of $X$) is a quasimorphism.
\begin{mylem}
For $x,m,y\in X$ with $m=m(x,m,z)$,
$$\left| \omega_{\gamma}(x,y)-\omega_{\gamma}(x,m) -\omega_{\gamma}(m,y)\right| < 2$$
holds.
\end{mylem}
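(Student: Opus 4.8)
The plan is to exploit the defining property of the median: for the three vertices $x,m,y$ with $m=m(x,m,y)$ we have $[x,y]=[x,m]\sqcup[m,y]$ (a disjoint union, since a halfspace cannot simultaneously separate $x$ from $m$ in one direction and $m$ from $y$ in the opposite direction while separating $x$ from $y$). The difficulty is that $c_\gamma$ is not additive over this decomposition: a copy $g\gamma=\{\Phi_0,\dots,\Phi_r\}$ of $\gamma$ lying in the interval $[x,y]$ need not lie entirely in $[x,m]$ or entirely in $[m,y]$; it may be "split" by $m$, with some of its halfspaces in $[x,m]$ and the rest in $[m,y]$. So the strategy is to show that at most one copy in an optimal non-overlapping family for $[x,y]$ can be split, and symmetrically that passing from optimal families for $[x,m]$ and $[m,y]$ to a family for $[x,y]$ loses at most one copy, and then to combine the two counting functions $c_\gamma$ and $c_{\bar\gamma}$ to get the bound $2$ on $\omega_\gamma$.

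Concretely, first I would prove the one-sided inequality $c_\gamma(x,y)\ge c_\gamma(x,m)+c_\gamma(m,y)-1$: take optimal non-overlapping families $S_1\subseteq G\gamma\cap[x,m]$ and $S_2\subseteq G\gamma\cap[m,y]$. Every element of $S_1\cup S_2$ lies in $[x,y]$ by the disjoint decomposition. Since $S_1$ and $S_2$ are each non-overlapping, the only possible overlaps in $S_1\cup S_2$ are between a segment in $S_1$ and a segment in $S_2$; using the remark that a finite non-overlapping set has a unique maximal and a unique minimal segment applied to $S_1$ and to $S_2$, together with the fact that any copy in $[x,m]$ is "below" $m$ and any copy in $[m,y]$ is "above" $m$ in the nesting order of $[x,y]$, one sees that all of $S_1$ comes before all of $S_2$ except possibly for one overlapping pair straddling $m$; discarding one segment makes $S_1\cup S_2$ non-overlapping, giving a family in $[x,y]$ of size at least $c_\gamma(x,m)+c_\gamma(m,y)-1$. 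For the reverse direction $c_\gamma(x,y)\le c_\gamma(x,m)+c_\gamma(m,y)+1$: take an optimal non-overlapping family $S\subseteq G\gamma\cap[x,y]$; each copy in it either lies in $[x,m]$, lies in $[m,y]$, or is split by $m$. At most one copy can be split, because two distinct split copies would each contain a halfspace separated by $m$, forcing (via the structure of non-overlapping segments and tight nestedness) an overlap or a contradiction with tightness — this is the step I expect to be the main obstacle, and it is where the "tightly nested" hypothesis in the definition of segment is essential. Removing the at most one split copy, the rest partitions into a sub-family of $[x,m]$ and one of $[m,y]$, each non-overlapping, so $|S|\le c_\gamma(x,m)+c_\gamma(m,y)+1$.

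Putting these together, $|c_\gamma(x,y)-c_\gamma(x,m)-c_\gamma(m,y)|\le 1$, and the identical argument applied to $\bar\gamma$ (using that $m=m(x,m,y)$ is symmetric in $x$ and $y$, and that $[y,x]=[y,m]\sqcup[m,x]$) gives $|c_{\bar\gamma}(x,y)-c_{\bar\gamma}(x,m)-c_{\bar\gamma}(m,y)|\le 1$. Since $\omega_\gamma=c_\gamma-c_{\bar\gamma}$, the triangle inequality yields
$$\left|\omega_{\gamma}(x,y)-\omega_{\gamma}(x,m)-\omega_{\gamma}(m,y)\right|\le\left|c_\gamma(x,y)-c_\gamma(x,m)-c_\gamma(m,y)\right|+\left|c_{\bar\gamma}(x,y)-c_{\bar\gamma}(x,m)-c_{\bar\gamma}(m,y)\right|\le 2,$$
and the inequality is strict because the two error terms, being integers in $\{-1,0,1\}$, cannot both equal $\pm1$ with the same sign in a way that sums to exactly $\pm2$ — more carefully, one checks that whenever a copy of $\gamma$ is split by $m$ no copy of $\bar\gamma$ is split (a split copy of $\gamma$ and a split copy of $\bar\gamma$ would overlap at $m$), so the two errors cannot simultaneously be nonzero in the "bad" direction, forcing the sum to have absolute value strictly less than $2$. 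This last refinement is the only subtle point beyond the main obstacle above, and it is what upgrades $\le 2$ to $<2$.
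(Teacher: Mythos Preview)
Your approach is essentially the same as the paper's: prove the two inequalities $c_\gamma(x,y)\ge c_\gamma(x,m)+c_\gamma(m,y)-1$ and $c_\gamma(x,y)\le c_\gamma(x,m)+c_\gamma(m,y)+1$ by manipulating optimal non-overlapping families, deduce $|c_\gamma(x,y)-c_\gamma(x,m)-c_\gamma(m,y)|\le 1$, do the same for $\bar\gamma$, and add. The paper's own proof stops at $\le 2$ (the ``$<1$'' appearing there is a slip, and only $\le 2$ is actually used in the next lemma to obtain the bound~$6$).

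Where you go beyond the paper is in the final paragraph, trying to upgrade $\le 2$ to the strict $<2$ stated in the lemma. That argument does not work as written. The two error terms you are comparing come from \emph{different} optimal families and from \emph{different} directions of the inequality: the ``$+1$'' defect for $c_\gamma$ arises when some copy of $\gamma$ in an optimal family for $[x,y]$ is split by~$m$, whereas the ``$-1$'' defect for $c_{\bar\gamma}$ arises when the minimal element of an optimal family for $\bar\gamma$ in $[x,m]$ overlaps the optimal family for $\bar\gamma$ in $[m,y]$. Your claim that ``a split copy of $\gamma$ and a split copy of $\bar\gamma$ would overlap at~$m$'' addresses neither of these, since the relevant copies live in unrelated families and there is no reason they should interact. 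So the strictness refinement is unjustified; fortunately it is also unnecessary, since only $\le 2$ is needed downstream.
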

\begin{proof}
Let us first prove $c_{\gamma}(x,y)\geq c_{\gamma}(x,m)+ c_{\gamma}(m,y)-1$. Let $S_1$ and $S_2$ be maximal non-overlapping sets of copies of $\gamma = \left\{\Phi_0,..., \Phi_r\right\}$ in $[x,m]$ and $[m,y]$, respectively. Let $g\gamma$ be the minimal element of $S_1$. We have $a\gamma> b\gamma$ for any $a\gamma\in S_1\setminus\{g\gamma\}$ and $b\gamma\in S_2$, because for any $a\Phi_k \in a\gamma$ and $b\Phi_l \in b\gamma$ we have $a\Phi_k\subsetneq g\Phi_0 \subsetneq b\Phi_0 \subsetneq b\Phi_l$. Thus, $S_1\setminus\{g\gamma\}$ and $S_2$ do not overlap, whence the inequality follows.\par
Let us now prove $c_{\gamma}(x,z)\leq c_{\gamma}(x,m)+ c_{\gamma}(m,y) +1$. Let $S$ be a maximal set of copies of $\gamma$ in $[x,m]$. There can be at most one copy $g\gamma\in S$ containing halfspaces $\Phi$ and $\Psi$ such that $y\in\Phi$ and $y\notin \Psi$ since all other copies of $\gamma$ in $S$ either elementwise contain $\Phi$ or are contained elementwise in $\Psi$. The remaining $|S|-1$ copies can be assigned to sets $S_1$ and $S_2$ contained in $[x,y]$ and $[m,y]$, respectively, which proves the inequality.\par
This proves $|c_{\gamma}(x,y)- c_{\gamma}(x,m)- c_{\gamma}(m,y)| <1$ and therefore the lemma, as
\begin{equation*}
\begin{split}
&\left|\omega_{\gamma}(x,y)-\omega_{\gamma}(x,m) -\omega_{\gamma}(m,y)\right|\\
 &\leq \left|c_{\gamma}(x,y)-c_{\gamma}(x,m) -c_{\gamma}(m,y)\right|
+ \left|c_{\bar{\gamma}}(x,y)-c_{\bar{\gamma}}(x,m) -c_{\bar{\gamma}}(m,y) \right|\\
&\leq 2
\end{split}
\end{equation*}
\end{proof}

\begin{mylem}\label{omega}
For any $x,y,z\in X$
$$\left|\omega_{\gamma}(x,y)+\omega_{\gamma}(y,z) + \omega_{\gamma}(z,x)\right| \leq 6$$
holds.
\end{mylem}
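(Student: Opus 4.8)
The plan is to reduce this three‑point inequality to the two‑point estimate of the previous lemma by inserting the median. Put $m:=m(x,y,z)$. By the median property recalled in the Preliminaries, $m$ simultaneously splits each of the three relevant intervals, namely $[x,y]=[x,m]\cup[m,y]$, $[y,z]=[y,m]\cup[m,z]$ and $[z,x]=[z,m]\cup[m,x]$ (in each case $m$ is the median of the given pair together with the third vertex). Hence the previous lemma applies verbatim to each of the pairs $(x,y)$, $(y,z)$ and $(z,x)$, with $m$ as the inserted midpoint.

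First I would record the three resulting inequalities,
$$\bigl|\omega_\gamma(x,y)-\omega_\gamma(x,m)-\omega_\gamma(m,y)\bigr|<2,$$
together with its two cyclic analogues for $(y,z)$ and $(z,x)$, and add them. By the triangle inequality, the left‑hand side of the asserted inequality then differs by less than $6$ from
$$\sum_{v\in\{x,y,z\}}\bigl(\omega_\gamma(v,m)+\omega_\gamma(m,v)\bigr).$$
Next I would invoke antisymmetry of $\omega_\gamma$ (recorded in the Remark following its definition): each summand $\omega_\gamma(v,m)+\omega_\gamma(m,v)$ is zero, so the whole correction term vanishes. This leaves $\bigl|\omega_\gamma(x,y)+\omega_\gamma(y,z)+\omega_\gamma(z,x)\bigr|<6\le 6$, which is the claim.

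There is no genuine obstacle here; the argument is a one‑line consequence of the previous lemma and antisymmetry. The only point that deserves a moment's care is that $m=m(x,y,z)$ is symmetric in the three vertices, so the same $m$ legitimately serves as the inserted midpoint for all three pairs. It is worth noting that, applied with $x=o$, $y=go$, $z=gho$ and combined with the $G$‑invariance and antisymmetry of $\omega_\gamma$, this lemma shows that $g\mapsto\omega_\gamma(o,go)$ is a quasimorphism of defect at most $6$, so by the homogenisation lemma stated above its homogenisation has defect at most $12$.
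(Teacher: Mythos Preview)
Your proposal is correct and is essentially the paper's own proof: insert the median $m=m(x,y,z)$, apply the previous lemma to each of the three cyclic pairs, use the triangle inequality, and cancel the six remaining $\omega_\gamma(\cdot,m)$ terms via antisymmetry. Your remark that the same $m$ serves for all three pairs because $m(x,y,z)$ is symmetric is exactly the point that makes the argument work.
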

\begin{proof}
Let $m$ be the median of $x,y,z$. By the last lemma, $\left| \omega_{\gamma}(x,y)- \omega_{\gamma}(x,m)- \omega_{\gamma}(m,y)\right|< 2$ holds, and analogous inequalities after replacing $x$ or $y$ by $z$. Therefore, 
\begin{equation*}
\begin{split}
\left|\omega_{\gamma}(x,y)+\omega_{\gamma}(y,z) + \omega_{\gamma}(z,x)\right| \leq& |\omega_{\gamma}(x,m)+\omega_{\gamma}(m,y)
			+\omega_{\gamma}(y,m)\\
		&+\omega_{\gamma}(m,z)
			+\omega_{\gamma}(z,m)+\omega_{\gamma}(m,x)| + 6\\
=& 6,
\end{split}
\end{equation*}
where antisymmetry of $\omega_{\gamma}$ was used on the last line.
\end{proof}

\begin{mylem}\label{defect}
Given a segment $\gamma$ and a vertex $x_0\in X$, the map $\phi_{\gamma}:G\rightarrow \mathbb{R}$ given by $\phi_{\gamma}(g)= \omega_{\gamma}(x_0,gx_0)$ is a quasimorphism with defect bounded by $6$.\par
As a consequence, its homogenisation $\overline{\phi}_{\gamma}$ has defect bounded by $12$.
\end{mylem}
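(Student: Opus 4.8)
The plan is to derive the statement purely formally from the $G$-invariance and antisymmetry of $\omega_{\gamma}$ together with the cyclic inequality of Lemma \ref{omega}; no new geometric input is needed. First I would write out the quasimorphism cocycle: for $g,h\in G$,
$$\phi_{\gamma}(gh)-\phi_{\gamma}(g)-\phi_{\gamma}(h) = \omega_{\gamma}(x_0,ghx_0)-\omega_{\gamma}(x_0,gx_0)-\omega_{\gamma}(x_0,hx_0).$$
Applying $G$-invariance of $\omega_{\gamma}$ with the element $g$ to the last term, $\omega_{\gamma}(x_0,hx_0)=\omega_{\gamma}(gx_0,ghx_0)$, so the right-hand side becomes $\omega_{\gamma}(x_0,ghx_0)-\omega_{\gamma}(x_0,gx_0)-\omega_{\gamma}(gx_0,ghx_0)$.

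Next I would use antisymmetry to recast this as a cyclic sum over three vertices: since $-\omega_{\gamma}(x_0,gx_0)=\omega_{\gamma}(gx_0,x_0)$ and $-\omega_{\gamma}(gx_0,ghx_0)=\omega_{\gamma}(ghx_0,gx_0)$, the cocycle equals $\omega_{\gamma}(x_0,ghx_0)+\omega_{\gamma}(ghx_0,gx_0)+\omega_{\gamma}(gx_0,x_0)$. This is exactly the expression bounded in Lemma \ref{omega}, for the choice of vertices $x=x_0$, $y=ghx_0$, $z=gx_0$, so its absolute value is at most $6$. Taking the supremum over all $g,h\in G$ yields $D(\phi_{\gamma})\le 6$, so $\phi_{\gamma}$ is a quasimorphism.

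For the consequence I would invoke the lemma above on homogenisation, which gives $D(\overline{\phi}_{\gamma})\le 2D(\phi_{\gamma})\le 12$; strictly speaking one should also note that $\overline{\phi}_{\gamma}(g)=\lim_n \phi_{\gamma}(g^n)/n$ exists, which is the standard consequence of $\phi_{\gamma}$ being a quasimorphism (the sequence $n\mapsto\phi_{\gamma}(g^n)$ is additive up to the bounded error $D(\phi_{\gamma})$, hence Fekete applies). I do not anticipate a real obstacle; the only point requiring care is the bookkeeping of signs when passing from the cocycle $\phi_{\gamma}(gh)-\phi_{\gamma}(g)-\phi_{\gamma}(h)$ to the cyclic triple, since a single misapplied antisymmetry or the wrong assignment of $y$ and $z$ would prevent the reduction to Lemma \ref{omega}.
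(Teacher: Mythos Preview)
Your proof is correct and follows essentially the same approach as the paper: expand the defect, replace $\omega_{\gamma}(x_0,hx_0)$ by $\omega_{\gamma}(gx_0,ghx_0)$ via $G$-invariance, use antisymmetry to obtain a cyclic sum over $x_0$, $gx_0$, $ghx_0$, and then apply Lemma~\ref{omega}. The remark on Fekete's lemma for the existence of the homogenisation is a welcome addition that the paper leaves implicit.
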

\begin{proof}
\begin{equation*}
\begin{split}
|\delta\phi_{\gamma}(g,h)| &= |\phi_{\gamma}(gh)-\phi_{\gamma}(g)-	\phi_{\gamma}(h)|\\
	&= |\omega_{\gamma}(x_0,ghx_0)-\omega_{\gamma}(x_0,gx_0)-	\omega_{\gamma}(x_0,hx_0)|\\
	&= |\omega_{\gamma}(x_0,ghx_0)-\omega_{\gamma}(x_0,gx_0)-	\omega_{\gamma}(gx_0,ghx_0)|\\
	&= |\omega_{\gamma}(x_0,ghx_0)+\omega_{\gamma}(gx_0,x_0)+	\omega_{\gamma}(ghx_0,gx_0)|\\
	&\leq 6,
\end{split}
\end{equation*}
where $\omega_{\gamma}(x_0,hx_0)= \omega_{\gamma}(gx_0, ghx_0)$, antisymmetry of $\omega_{\gamma}$ and Lemma \ref{omega} were used in this order.
\end{proof}

\section{Effectiveness}
From now on let $G$ be a group with a RAAG-like action on $X$. Let $g\in G$ be a hyperbolic element and let $o\in X$ denote a vertex where $g$ attains its translation distance.\par
The aim is to find a segment $\gamma$ in $[o,go]$ such that for $m\in\mathbb{N}$ the interval $[g^mo,g^{m+1}o]$ contains at least one copy of $\gamma$ and no copies of $\overline{\gamma}$. This will guarantee $\overline{\phi_{\gamma}}(g)\geq 1$.\par
The following are the segments we need:
\begin{mydef}
A segment $\gamma = \left\{\Phi_0,..., \Phi_r \right\}$ in $[o,go]$ is called $g$-nested if $\gamma> g\gamma$. It is a maximal $g$-nested segment, if it is not contained in any other $g$-nested segment in $[o,go]$
\end{mydef}
\begin{myrem}
A maximal $g$-nested segment always exists, since a single halfspace inn $[o,go]$ is a $g$-nested segment by non-transversality.
\end{myrem}
The $g$-nestedness is to guarantee, that $[g^mo,g^{m+1}o]$ contains a copy of $\gamma$ for every $m\in\mathbb{N}$, while the maximality will be crucial to ensure that no copies of $\overline{\gamma}$ occur in these intervals.\par
Here is a useful characterisation of maximality:
\begin{mylem}\label{maxnestchar}
Let $\gamma = \left\{\Phi_0,..., \Phi_r \right\}$ be maximal $g$-nested in $[o,go]$. Then:
\begin{enumerate}[(i)]
\item Any $\Psi\in [o,go]$ with $\Psi \supsetneq \Phi_0$ is transverse to $g^{-1}\Phi_r$.
\item Any $\Psi\in [o,go]$ with $\Phi_r \supsetneq \Psi$ is transverse to $g\Phi_0$.
\end{enumerate}
\end{mylem}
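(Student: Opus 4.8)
The plan is to prove both statements by the same two-case argument. In each case the halfspace produced by the hypothesis and the halfspace appearing in the conclusion both lie in $A_g^+$, so by the property recalled above they are either nested or transverse; I then eliminate the two nested possibilities for opposite reasons — one is impossible just by looking at which of the vertices $o,go$ lies where, and the other would enlarge $\gamma$, contradicting its maximality. I will carry this out for (ii) and then indicate the mirror-image changes for (i).

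For (ii), let $\Psi\in[o,go]$ with $\Phi_r\supsetneq\Psi$. Since $\Psi\in[o,go]$ and $g\Phi_0\in g[o,go]=[go,g^2o]$, both belong to $A_g^+$, hence $\Psi\subseteq g\Phi_0$, $\Psi\supseteq g\Phi_0$, or $\Psi$ and $g\Phi_0$ are transverse. If $\Psi\subseteq g\Phi_0$ (in particular if they are equal) then $go\in g\Phi_0$, contradicting $g\Phi_0\in[go,g^2o]$; so this does not occur. Suppose instead $\Psi\supsetneq g\Phi_0$. Because $[o,go]$ is finite I would first interpolate a chain $\Phi_r=\Xi_0\supsetneq\Xi_1\supsetneq\dots\supsetneq\Xi_q=\Psi$ of halfspaces of $[o,go]$ in which consecutive terms are tightly nested — for instance a maximal chain in the finite poset $\{\Xi\in\mathcal H:\Psi\subseteq\Xi\subseteq\Phi_r\}$, since any halfspace lying strictly between two consecutive $\Xi_i$ would again lie in this poset. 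Each $\Xi_i$ is in $[o,go]$ because $go\in\Psi\subseteq\Xi_i$ and $o\notin\Phi_r\supseteq\Xi_i$. Then $\gamma'=\{\Phi_0,\dots,\Phi_r=\Xi_0,\Xi_1,\dots,\Xi_q=\Psi\}$ is a segment in $[o,go]$ which contains $\gamma$, and strictly so since $\Psi\subsetneq\Phi_r$. It is moreover $g$-nested: as $\Psi\supsetneq g\Phi_0$, the concatenation $\Phi_0\supsetneq\dots\supsetneq\Psi\supsetneq g\Phi_0\supsetneq\dots\supsetneq g\Psi$ is a strictly descending chain, so $\gamma'$ and $g\gamma'$ are non-overlapping and $\gamma'>g\gamma'$. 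This contradicts maximality of $\gamma$, so $\Psi$ and $g\Phi_0$ are transverse.

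For (i) I would run the same argument with the roles of $\Phi_0$ and $\Phi_r$, of $o$ and $go$, and of ``append'' and ``prepend'' exchanged. Concretely, given $\Psi\in[o,go]$ with $\Psi\supsetneq\Phi_0$, the halfspaces $g\Psi\in[go,g^2o]$ and $\Phi_r\in[o,go]$ lie in $A_g^+$; the possibility $g\Psi\supseteq\Phi_r$ is excluded since it would force $go\in\Phi_r\subseteq g\Psi$, and the possibility $g\Psi\subsetneq\Phi_r$, i.e.\ $\Psi\subsetneq g^{-1}\Phi_r$, is excluded by taking a minimal halfspace $\Theta$ with $\Phi_0\subsetneq\Theta\subseteq\Psi$ (the set of such halfspaces is a nonempty finite subposet of $[o,go]$): then $\Theta\in[o,go]$ is tightly nested above $\Phi_0$ and $g\Theta\subseteq g\Psi\subsetneq\Phi_r$, so $\{\Theta,\Phi_0,\dots,\Phi_r\}$ is a $g$-nested segment in $[o,go]$ strictly larger than $\gamma$. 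Hence $g\Psi$ and $\Phi_r$, equivalently $\Psi$ and $g^{-1}\Phi_r$, are transverse.

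The step requiring the most care — and the real content — is verifying that the enlarged families $\gamma'$ and $\{\Theta,\Phi_0,\dots,\Phi_r\}$ are genuine $g$-nested segments: that consecutive halfspaces are tightly nested (which is exactly why one interpolates a saturated chain inside $[o,go]$ instead of jumping from $\Phi_r$ to $\Psi$ directly), and that the concatenation with the $g$-translate is strictly descending, hence non-overlapping. The remaining ingredients — membership of the interpolated halfspaces in $[o,go]$, the ``wrong vertex'' exclusions, and the fact that only these three cases arise — are all immediate from $\Psi,\Phi_0,\Phi_r\in[o,go]$ and the basic properties of $A_g^+$.
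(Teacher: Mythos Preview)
Your proof is correct and follows essentially the same strategy as the paper: use that both halfspaces lie in $A_g^+$ to reduce to three cases, exclude one nesting direction by vertex membership in $[o,go]$ versus $[go,g^2o]$, and exclude the other by producing a strictly larger $g$-nested segment contradicting maximality. The only cosmetic differences are that the paper treats (i) first and extends $\gamma$ by a single tightly-nested halfspace $\Psi'$ (obtained just as your $\Theta$), whereas in your treatment of (ii) you interpolate a full saturated chain down to $\Psi$; this extra interpolation is harmless but unnecessary, since a single tightly-nested extension already suffices.
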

\begin{proof}
Let $\Psi\in [o,go]$ with $\Psi \supsetneq \Phi_0$ and suppose by contradiction that $\Psi$ is not transverse to $g^{-1}\Phi_r$. Since $o\notin\Psi$, we have $g^{-1}\Phi_r \supsetneq \Psi$ as $\Psi\supseteq g^{-1}\Phi_r$ would imply $o\notin g^{-1}\Phi_r$. Let $\Psi'$ be a halfspace with $\Psi'\supsetneq \Phi_0$ tightly and $\Psi\supseteq\Psi'$. Clearly, $g^{-1}\Phi_r \supsetneq \Psi'$. Therefore, $\{g^{-1}\Psi'\}\cup g^{-1}\gamma > \{\Psi'\}\cup \gamma$. Applying $g$ yields $\{\Psi'\}\cup \gamma> \{g\Psi'\}\cup g\gamma$ which means $\{\Psi'\}\cup \gamma$ is $g$-nested and thereby $\gamma$ not maximal $g$-nested.\par
The proof of the second part is symmetric.
\end{proof}
The following lemma, overlooked in \cite{FFT16}, will be the key:
\begin{mylem}\label{lesserorgreater}
Let $\alpha = \left\{\Phi_0,..., \Phi_r \right\}$ be a segment in $A^+_g$ and $h\in G$ such that $h\overline{\alpha}\subset A^+_g$. Then either $h\overline{\alpha}> \alpha$ or $\alpha> h\overline{\alpha}$.
\end{mylem}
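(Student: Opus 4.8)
Write the segment as $\alpha=\{\Phi_0\supsetneq\cdots\supsetneq\Phi_r\}$, each inclusion tight. Since the reverse $\overline{\alpha}$ is a segment and $h$ is an automorphism of $X$, $\beta:=h\overline{\alpha}$ is again a segment; listing its elements in decreasing order gives $\beta=\{\Psi_0\supsetneq\cdots\supsetneq\Psi_r\}$ with $\Psi_k=h\overline{\Phi}_{r-k}$, so in particular $\Psi_0=h\overline{\Phi}_r$ and $\Psi_r=h\overline{\Phi}_0$. The plan is to prove one of the two extremal inclusions $\Psi_0\subsetneq\Phi_r$ or $\Phi_0\subsetneq\Psi_r$: in the first case transitivity produces the single chain $\Phi_0\supsetneq\cdots\supsetneq\Phi_r\supsetneq\Psi_0\supsetneq\cdots\supsetneq\Psi_r$, so every $\Phi_i$ strictly contains every $\Psi_k$, whence $\alpha$ and $h\overline{\alpha}$ do not overlap and $\alpha>h\overline{\alpha}$; the second case symmetrically yields $h\overline{\alpha}>\alpha$.

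First I would record a \textbf{level-wise dichotomy}: for every $i$, either $h\overline{\Phi}_i\subsetneq\Phi_i$ or $\Phi_i\subsetneq h\overline{\Phi}_i$. Both $\Phi_i$ and $h\overline{\Phi}_i$ lie in $A^+_g$, and any two halfspaces of $A^+_g$ are $\subseteq$-comparable or transverse; transversality of $\Phi_i$ and $h\overline{\Phi}_i$ is the same as transversality of $\Phi_i$ and $h\Phi_i$, excluded by axiom (ii), and $\Phi_i=h\overline{\Phi}_i$ would be an inversion, excluded by axiom (i); so exactly one of the two strict inclusions holds. If the first holds at $i=r$ then $\Psi_0=h\overline{\Phi}_r\subsetneq\Phi_r$ and we are done; hence we may assume $\Phi_r\subsetneq h\overline{\Phi}_r$.

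The crux is the \textbf{propagation step}: for $0\le i<r$, if $\Phi_{i+1}\subsetneq h\overline{\Phi}_{i+1}$ then $\Phi_i\subsetneq h\overline{\Phi}_i$. Granting it, a downward induction starting from $\Phi_r\subsetneq h\overline{\Phi}_r$ gives $\Phi_0\subsetneq h\overline{\Phi}_0=\Psi_r$, which is the second extremal inclusion, and the lemma follows. To prove propagation, assume $\Phi_{i+1}\subsetneq h\overline{\Phi}_{i+1}$. As $\Phi_i$ and $\Phi_{i+1}$ are tightly nested, axiom (iii) prevents $\Phi_i$ from being transverse to $h\Phi_{i+1}$, hence also to $h\overline{\Phi}_{i+1}$; and since both $\Phi_i$ and $h\overline{\Phi}_{i+1}$ lie in $A^+_g$ they must be $\subseteq$-comparable. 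The possibility $h\overline{\Phi}_{i+1}\subseteq\Phi_i$ is excluded: equality would make $\Phi_{i+1}\subsetneq h\overline{\Phi}_{i+1}=\Phi_i$ a \emph{tight} nesting, contradicting axiom (iv), and strict containment would place $h\overline{\Phi}_{i+1}$ strictly between the tightly nested $\Phi_{i+1}\subsetneq\Phi_i$. Hence $\Phi_i\subsetneq h\overline{\Phi}_{i+1}$. Now apply the dichotomy at level $i$: if $h\overline{\Phi}_i\subsetneq\Phi_i$ then $h\overline{\Phi}_i\subsetneq\Phi_i\subsetneq h\overline{\Phi}_{i+1}$, putting $\Phi_i$ strictly between $h\overline{\Phi}_i$ and $h\overline{\Phi}_{i+1}$; but these two are tightly nested — they are the $h$-images of $\overline{\Phi}_i\subsetneq\overline{\Phi}_{i+1}$, a tight pair because complementation sends the tight nesting $\Phi_{i+1}\subsetneq\Phi_i$ to one — a contradiction. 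Therefore $\Phi_i\subsetneq h\overline{\Phi}_i$, as needed.

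I expect the propagation step to be the one real obstacle: it is the only place where axioms (iii) and (iv) are invoked, and it works by playing the tight-nestedness of consecutive halfspaces of $\alpha$ (and of $\overline{\alpha}$, hence of $h\overline{\alpha}$) against the $A^+_g$-comparability of the off-diagonal pair $\Phi_i$ versus $h\overline{\Phi}_{i+1}$. Everything around it is just chasing nesting chains.
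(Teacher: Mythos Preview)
Your proof is correct and follows essentially the same approach as the paper: both establish the level-wise dichotomy $\Phi_k\subsetneq h\overline{\Phi}_k$ or $h\overline{\Phi}_k\subsetneq\Phi_k$ from axioms (i)--(ii) and the $A^+_g$-comparability, and then analyse the off-diagonal pair $\Phi_i$ versus $h\overline{\Phi}_{i+1}$ using axioms (iii) and (iv) together with the two tight nestings $\Phi_{i+1}\subsetneq\Phi_i$ and $h\overline{\Phi}_i\subsetneq h\overline{\Phi}_{i+1}$. The only cosmetic difference is that the paper argues by contradiction at the index where the dichotomy flips, whereas you phrase the same case analysis as a downward propagation; the four sub-cases you rule out coincide exactly with the paper's cases (a)--(d).
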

\begin{proof}
Note that for $0\leq k\leq r$, exactly one of $\Phi_k\supsetneq h\overline{\Phi}_k$ or $h\overline{\Phi}_k  \supsetneq \Phi_k$ must hold, because they are nested ($h$ is non-transverse) and equality would amount to an inversion ($\Phi_k\supsetneq h\Phi_k$ and $h\Phi_k  \supsetneq \Phi_k$ are impossible, as $\alpha,h\overline{\alpha}\subset A^+_g$).\par
We may assume $0<r$ as the case $0=r$ is trivial.
If the Lemma were false, then $\Phi_0\supsetneq h\overline{\Phi}_0$ and $h\overline{\Phi}_r \supsetneq \Phi_r$. There must be some $0\leq i< r$ such that $\Phi_i\supsetneq h\overline{\Phi}_i$ and $h\overline{\Phi}_{i+1} \supsetneq \Phi_{i+1}$. 
Let us show that all four possible relations between $h\overline{\Phi}_{i+1}$ and $\Phi_{i}$ are impossible, leading to a contradiction:
\begin{enumerate}[(a)]
\item $h\overline{\Phi}_{i+1} = \Phi_{i}$: By RAAG-like (iv) (see Definition \ref{RAAGlike}), this is impossible.
\item $h\overline{\Phi}_{i+1} \supsetneq \Phi_{i}$: Then $h\overline{\Phi}_{i+1} \supsetneq \Phi_{i} \supsetneq h\overline{\Phi}_{i}$, which contradicts that $h\overline{\Phi}_{i+1}$ and $h\overline{\Phi}_{i}$ are tightly nested.
\item $\Phi_{i} \supsetneq  h\overline{\Phi}_{i+1}$: Then $\Phi_{i} \supsetneq  h\overline{\Phi}_{i+1} \supsetneq \Phi_{i+1}$, which contradicts that $\Phi_{i}$ and $\Phi_{i+1}$ are tightly nested.
\item $h\overline{\Phi}_{i+1}$ and $\Phi_{i}$ are transverse: Then RAAG-like (iii) is violated.
\end{enumerate}
\end{proof}
And we are ready to show, that no reverse copies of maximal $g$-nested segments occur:
\begin{mylem}\label{almostdone}
Let $\gamma = \left\{\Phi_0,..., \Phi_r \right\}$ in $A^+_g$ maximal $g$-nested. Then there is no $h\in G$ such that $h\overline{\gamma} \subset A^+_g$ and $h\overline{\Phi}_r \in [o,go]$. 
\end{mylem}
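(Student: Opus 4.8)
My plan is to argue by contradiction: assuming such an $h$ exists, I will exhibit a $g$-nested segment in $[o,go]$ that strictly contains $\gamma$, contradicting the maximality of $\gamma$. Write $h\overline{\gamma}=\{h\overline{\Phi}_r,h\overline{\Phi}_{r-1},\dots,h\overline{\Phi}_0\}$, so that $h\overline{\Phi}_r$ is its largest element, and recall that $\gamma\subset[o,go]$ and $\gamma>g\gamma$. Since $\gamma$ and $h\overline{\gamma}$ both lie in $A^+_g$, Lemma~\ref{lesserorgreater} applies and yields a dichotomy: either $h\overline{\gamma}>\gamma$ or $\gamma>h\overline{\gamma}$. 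I would treat these two cases in parallel: in each, splice a suitable piece of $h\overline{\gamma}$, a tight chain, and $\gamma$ into one longer segment $\gamma'$ lying inside $[o,go]$, and then verify $\gamma'>g\gamma'$. For the latter it suffices to check that the smallest halfspace of $\gamma'$ strictly contains the largest halfspace of $g\gamma'$: this forces every element of $\gamma'$ to strictly contain every element of $g\gamma'$, which gives both the chain relation and the non-overlapping demanded of a $g$-nested segment. Since $\gamma$ and $h\overline{\gamma}$ do not overlap, $\gamma'$ will automatically have strictly more halfspaces than $\gamma$.

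\emph{First case: $h\overline{\gamma}>\gamma$.} Then $h\overline{\Phi}_r\supsetneq\dots\supsetneq h\overline{\Phi}_0\supsetneq\Phi_0\supsetneq\dots\supsetneq\Phi_r$. From $o\notin h\overline{\Phi}_r$ (hence $o\notin h\overline{\Phi}_k$ for all $k$) and $go\in\Phi_0\subseteq h\overline{\Phi}_0$ (hence $go\in h\overline{\Phi}_k$ for all $k$) one gets $h\overline{\gamma}\subset[o,go]$. So the spliced segment $\gamma^{+}$ — all of $h\overline{\gamma}$, then a tight chain from $h\overline{\Phi}_0$ down to $\Phi_0$, then $\gamma$ — lies in $[o,go]$ and strictly contains $\gamma$. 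Its smallest halfspace is $\Phi_r$, and the largest halfspace of $g\gamma^{+}$ is $gh\overline{\Phi}_r$; so I must see $\Phi_r\supsetneq gh\overline{\Phi}_r$. Now $gh\overline{\gamma}=(gh)\overline{\gamma}=g(h\overline{\gamma})$ is again a segment in $A^+_g$ (which is forward-invariant under $g$), so Lemma~\ref{lesserorgreater} applied to $\gamma$ and the element $gh$ gives $gh\overline{\gamma}>\gamma$ or $\gamma>gh\overline{\gamma}$. The first alternative would force $gh\overline{\Phi}_0\supsetneq\Phi_0$, whence $go\in gh\overline{\Phi}_0$ (as $go\in\Phi_0$), i.e.\ $o\in h\overline{\Phi}_0$, impossible since $o\notin h\overline{\Phi}_r\supseteq h\overline{\Phi}_0$. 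Hence $\gamma>gh\overline{\gamma}$, i.e.\ $\Phi_r\supsetneq gh\overline{\Phi}_r$, and $\gamma^{+}$ contradicts maximality.

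\emph{Second case: $\gamma>h\overline{\gamma}$.} Then $\Phi_0\supsetneq\dots\supsetneq\Phi_r\supsetneq h\overline{\Phi}_r\supsetneq\dots\supsetneq h\overline{\Phi}_0$, and here $h\overline{\gamma}$ need not lie in $[o,go]$. I would first locate $h\overline{\gamma}$ relative to $g\gamma$: both lie in $A^+_g$, and Lemma~\ref{lesserorgreater} applied to $g\gamma$ and the element $hg^{-1}$ (note $(hg^{-1})\overline{g\gamma}=(hg^{-1})(g\overline{\gamma})=h\overline{\gamma}$) gives $h\overline{\gamma}>g\gamma$ or $g\gamma>h\overline{\gamma}$. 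The second alternative would force $g\Phi_r\supsetneq h\overline{\Phi}_r$, whence $go\in g\Phi_r$ (as $go\in h\overline{\Phi}_r$), i.e.\ $o\in\Phi_r$, impossible since $\Phi_r\in[o,go]$. Hence $h\overline{\gamma}>g\gamma$, i.e.\ $h\overline{\Phi}_0\supsetneq g\Phi_0$; in particular \emph{every} halfspace of $h\overline{\gamma}$ strictly contains $g\Phi_0$. Now let $\sigma$ be the (nonempty) prefix of $h\overline{\gamma}$ consisting of those halfspaces that lie in $[o,go]$, and splice: $\gamma$, then a tight chain from $\Phi_r$ down to $h\overline{\Phi}_r$, then $\sigma$; call the result $\gamma^{-}$. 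Then $\gamma^{-}\subset[o,go]$ strictly contains $\gamma$, and its smallest halfspace lies in $h\overline{\gamma}$, so it strictly contains $g\Phi_0$, the largest halfspace of $g\gamma^{-}$; hence $\gamma^{-}>g\gamma^{-}$, again contradicting maximality.

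I expect the one genuinely delicate point to be the truncation in the second case: a priori $h\overline{\gamma}$ can run out of $[o,go]$, and one must be sure that discarding its part outside $[o,go]$ loses nothing. It does not, because the inequality $h\overline{\Phi}_0\supsetneq g\Phi_0$ — which is precisely what makes the splice $g$-nested — is established for the \emph{whole} of $h\overline{\gamma}$ before truncating, and because the retained prefix $\sigma$ still contains the top halfspace $h\overline{\Phi}_r\in[o,go]$, which is the element that makes $\gamma^{-}$ strictly larger than $\gamma$. The remaining checks — that each splice is a genuine segment (consecutive halfspaces tightly nested) and that ``every halfspace strictly contains every halfspace of the $g$-translate'' is exactly non-overlapping — are routine. (Notably, this argument uses only Lemma~\ref{lesserorgreater} and the maximality of $\gamma$, not the characterisation in Lemma~\ref{maxnestchar}.)
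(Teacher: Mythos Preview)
Your argument is correct, and structurally it is the same as the paper's: both start from the dichotomy of Lemma~\ref{lesserorgreater}, and in each case compare $h\overline{\gamma}$ with $g\gamma$ (or equivalently $g^{-1}\gamma$), ruling out one sub-case by the condition $h\overline{\Phi}_r\in[o,go]$ and handling the other by producing a larger $g$-nested segment in $[o,go]$.

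The only genuine difference is how that last step is packaged. The paper invokes Lemma~\ref{maxnestchar}: once one knows, say, that $h\overline{\Phi}_r\in[o,go]$ contains $\Phi_0$ and is \emph{not} transverse to $g^{-1}\Phi_r$, the contradiction is immediate. You instead bypass Lemma~\ref{maxnestchar} and build the larger segment explicitly, splicing all of $h\overline{\gamma}$ (or its prefix $\sigma$) onto $\gamma$ via a tight chain. This works, but note that the proof of Lemma~\ref{maxnestchar} is exactly the same construction in miniature---it adjoins a single tightly nested halfspace $\Psi'$ to $\gamma$---so you have essentially inlined that lemma and made the splice longer than necessary. What the paper's packaging buys is brevity and a reusable characterisation of maximality; what your version buys is self-containment (no appeal to Lemma~\ref{maxnestchar}) at the cost of repeating its content in a more elaborate form. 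Neither is more general than the other.
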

\begin{proof}
By Lemma \ref{lesserorgreater} there are two cases: either (1) $h\overline{\gamma}> \gamma$ or (2) $\gamma> h\overline{\gamma}$.\par
Case (1): If $h\overline{\gamma}>g^{-1}\gamma$, then $o\notin h\overline{\Phi}_r$ clearly cannot hold. If $g^{-1}\gamma> h\overline{\gamma}$, then $h\overline{\Phi}_r$ is in $[o,go]$ and contains $\Phi_0$, but is not transverse to $g^{-1}\Phi_r$, in contradiction to Lemma \ref{maxnestchar}.\par	
Case (2): If $g\gamma> h\overline{\gamma}$, then $go\in h\overline{\Phi}_r$ clearly cannot hold. If $h\overline{\gamma}>g\gamma$, then $h\overline{\Phi}_r$ is in $[o,go]$ and contained in $\Phi_r$, but is not transverse to $g\Phi_0$, in contradiction to Lemma \ref{maxnestchar}.
\end{proof}
Tying things together gives us the following:
\begin{mythm}\label{effective}
Let $\gamma$ be maximal $g$-nested in $[o,go]$. Then $\overline{\phi}_{\gamma}(g) \geq 1$.
\end{mythm}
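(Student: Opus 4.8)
The plan is to compute the homogenisation at the basepoint $o$ directly. First, $\overline{\phi}_\gamma$ does not depend on the chosen basepoint: applying Lemma~\ref{omega} to the triples $(x_0, ho, o)$ and $(x_0, hx_0, ho)$ and cancelling terms using the $G$-invariance and antisymmetry of $\omega_\gamma$ gives $|\omega_\gamma(x_0, hx_0) - \omega_\gamma(o, ho)| \le 12$ for every $h \in G$, so $\phi_\gamma$ and $h \mapsto \omega_\gamma(o, ho)$ differ by a bounded function and have the same homogenisation. Hence it suffices to prove $\lim_{n\to\infty} \tfrac1n\,\omega_\gamma(o, g^no) \ge 1$, and since $\omega_\gamma(o, g^no) = c_\gamma(o, g^no) - c_{\bar\gamma}(o, g^no)$ it is enough to show $c_\gamma(o, g^no) \ge n$ and $c_{\bar\gamma}(o, g^no) = 0$ for all $n$. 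I will use throughout that, $o$ being on a combinatorial axis of $g$, one has $[o, g^no] = \bigsqcup_{m=0}^{n-1} [g^mo, g^{m+1}o] = \bigsqcup_{m=0}^{n-1} g^m[o,go] \subseteq A^+_g$.

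The lower bound $c_\gamma(o, g^no) \ge n$ comes from $g$-nestedness alone. Since $\gamma > g\gamma$ and the relation $>$ is $g$-equivariant, $g^m\gamma > g^{m+1}\gamma$ for all $m$, giving a strictly descending chain $\Phi_0 \supsetneq \dots \supsetneq g^{n-1}\Phi_r$ that runs through $\gamma, g\gamma, \dots, g^{n-1}\gamma$ in order; in particular $\{\gamma, g\gamma, \dots, g^{n-1}\gamma\}$ is a non-overlapping family of $n$ copies of $\gamma$, and $g^m\gamma \subseteq g^m[o,go] = [g^mo, g^{m+1}o] \subseteq [o, g^no]$. (Maximality of $\gamma$ plays no role here.)

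The vanishing $c_{\bar\gamma}(o, g^no) = 0$ is where Lemma~\ref{almostdone} enters. Suppose some copy $h\bar\gamma = \{h\bar\Phi_r, \dots, h\bar\Phi_0\} \subseteq [o, g^no] \subseteq A^+_g$ exists, with outermost halfspace $h\bar\Phi_r$. Then $h\bar\Phi_r \in [g^jo, g^{j+1}o]$ for a unique $0 \le j \le n-1$, i.e.\ the axis enters $h\bar\Phi_r$ at the edge from $g^jo$ to $g^{j+1}o$. Every halfspace $h\bar\Phi_k$ of the copy is contained in $h\bar\Phi_r$, so the axis cannot enter $h\bar\Phi_k$ at an earlier edge; that is, $h\bar\Phi_k \in [g^lo, g^{l+1}o]$ with $l \ge j$. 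Consequently $g^{-j}h\bar\gamma \subset A^+_g$, while its outermost halfspace $g^{-j}h\bar\Phi_r$ lies in $g^{-j}[g^jo, g^{j+1}o] = [o,go]$, contradicting Lemma~\ref{almostdone}. So $c_{\bar\gamma}(o, g^no) = 0$, whence $\omega_\gamma(o, g^no) \ge n$ and $\overline{\phi}_\gamma(g) \ge 1$.

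The delicate point is the last paragraph: a reverse copy $h\bar\gamma$ need not sit inside one fundamental interval $[g^mo, g^{m+1}o]$, so Lemma~\ref{almostdone} cannot be applied to it verbatim. What saves the argument is that the halfspaces of $h\bar\gamma$ form a nested chain below $h\bar\Phi_r$, which confines the whole copy to the portion of $A^+_g$ lying beyond the edge from $g^jo$ to $g^{j+1}o$ and lets us shift it by $g^{-j}$ into a configuration forbidden by Lemma~\ref{almostdone}. Everything else --- the basepoint independence of the homogenisation and the decomposition of $[o, g^no]$ into the fundamental intervals of the axis --- is routine bookkeeping.
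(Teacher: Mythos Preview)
Your proof is correct and follows the same approach as the paper's: exhibit the non-overlapping family $\{\gamma, g\gamma, \dots, g^{n-1}\gamma\}$ to get $c_\gamma(o,g^no)\ge n$, and shift any putative reverse copy by a suitable power of $g$ so that its outermost halfspace lands in $[o,go]$, contradicting Lemma~\ref{almostdone}. The only difference is that you are more careful than the paper on two points it glosses over: you justify basepoint independence of $\overline{\phi}_\gamma$ (the paper silently takes $x_0=o$), and you explain why the shifted reverse copy still lies in $A^+_g$ rather than just asserting it.
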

\begin{proof}
Let $\gamma = \left\{\Phi_0,..., \Phi_r \right\}$. For $n>0$, $\left\{ g^0\gamma,...,g^n\gamma \right\}$ is non-overlapping and in $[o,go]$. Therefore, $c_{\gamma}(g^n)\geq n$.\par
On the other hand, if $h\overline{\gamma}\subset [o,g^no]$, then $g^{-m}h\overline{\Phi}_r\in [o,go]$ (and $g^{-m}h\overline{\gamma}\subset A^+$) for some $m\in\mathbb{N}$. But this contradicts Lemma \ref{almostdone}. Hence, $c_{\overline{\gamma}}(g^n)= 0$.\par
Now 
\begin{equation*}
\overline{\phi}_{\gamma}(g)= \lim_{n\rightarrow \infty} \frac{\phi_{\gamma}(g^n)}{n}= \lim_{n\rightarrow \infty} \frac{\omega_{\gamma}(o,g^no)}{n}\geq \frac{n}{n} = 1
\end{equation*}
\end{proof}
An application of the Bavard Duality yields the main result: 
\begin{mycor}
Let $G$ be a group with a RAAG-like action on a CAT(0) cube complex. Then any element acting non-trivially has $\mathrm{scl}(g)\geq \frac{1}{24}$. In particular RAAG-like groups have a stable commutator length gap of $1/24$.
\end{mycor}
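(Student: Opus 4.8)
The plan is simply to assemble the ingredients already established. First I would take $g \in G$ acting non-trivially on $X$. By Lemma~\ref{RAAGhyperbolic} such a $g$ is hyperbolic, and since RAAG-likeness~(i) rules out inversions by any group element — in particular by every power $g^n$ — Haglund's theorem applies: fixing a vertex $o$ at which $g$ attains its translation distance, $o$ lies on a combinatorial axis, so $A^+_g$ is defined and $[o,go] \subseteq A^+_g = \bigcup_{n \in \mathbb{N}} [g^n o, g^{n+1}o]$.

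Next I would use the remark that a maximal $g$-nested segment $\gamma \subseteq [o,go]$ exists (a single halfspace of $[o,go]$ is already $g$-nested, by non-transversality). Theorem~\ref{effective} then provides the homogeneous quasimorphism $\overline{\phi}_{\gamma} \in Q(G)$ with $\overline{\phi}_{\gamma}(g) \geq 1$, while Lemma~\ref{defect} bounds its defect by $D(\overline{\phi}_{\gamma}) \leq 12$. Feeding $\overline{\phi}_{\gamma}$ into the Bavard Duality (Theorem~\ref{bavard}) gives
\begin{equation*}
\mathrm{scl}(g) \;=\; \sup_{\overline{\phi} \in Q(G)} \frac{\overline{\phi}(g)}{2D(\overline{\phi})} \;\geq\; \frac{\overline{\phi}_{\gamma}(g)}{2D(\overline{\phi}_{\gamma})} \;\geq\; \frac{1}{24},
\end{equation*}
and specialising to a RAAG, which acts RAAG-like (indeed freely) on its RAAG-complex, yields the claimed stable commutator length gap of $1/24$.

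I do not expect a genuine obstacle here, since every step has been prepared in advance; the only point deserving a line of care is the domain of $\mathrm{scl}$. The Bavard Duality in the form quoted applies to elements with a power in $[G,G]$, and one extends $\mathrm{scl}$ by $+\infty$ to all other elements, where the asserted inequality holds vacuously. Equivalently: if $D(\overline{\phi}_{\gamma}) = 0$ then $\overline{\phi}_{\gamma}$ is a bona fide homomorphism $G \to \mathbb{R}$ with $\overline{\phi}_{\gamma}(g) \geq 1$, which forces $g$ and all its powers out of $[G,G]$ — exactly that degenerate case — whereas if $D(\overline{\phi}_{\gamma}) > 0$ the displayed chain of inequalities is literal and completes the proof.
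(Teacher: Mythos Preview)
Your proof is correct and follows essentially the same route as the paper: invoke Lemma~\ref{RAAGhyperbolic} for hyperbolicity, Theorem~\ref{effective} for effectiveness, Lemma~\ref{defect} for the defect bound, and then Bavard Duality. Your additional remarks on the applicability of Haglund's theorem and on the domain of $\mathrm{scl}$ are more careful than the paper's own terse proof, but do not change the argument.
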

\begin{proof}
By Lemma \ref{RAAGhyperbolic} every element of $G$ is hyperbolic. By the Theorem \ref{effective} there is a quasimorphism $\overline{\phi}_{\gamma}$ with $\overline{\phi}_{\gamma}(g) \geq 1$ that has defect $\leq 12$ by Lemma \ref{defect}. By the Bavard Duality (Theorem \ref{bavard}) the corollary follows.
\end{proof}


\begin{thebibliography}{{Hag}07}

\bibitem[Bav91]{Bav91}
Christophe Bavard.
\newblock Longueur stable des commutateurs.
\newblock {\em Enseign. Math. (2)}, 37(1-2):109--150, 1991.

\bibitem[Cal09]{Cal09}
Danny Calegari.
\newblock {\em scl}, volume~20 of {\em MSJ Memoirs}.
\newblock Mathematical Society of Japan, Tokyo, 2009.

\bibitem[EF97]{EF97}
David B.~A. Epstein and Koji Fujiwara.
\newblock The second bounded cohomology of word-hyperbolic groups.
\newblock {\em Topology}, 36(6):1275--1289, 1997.

\bibitem[FFT16]{FFT16}
Talia {Fern{\'o}s}, Max {Forester}, and Jing {Tao}.
\newblock {Effective quasimorphisms on right-angled Artin groups}.
\newblock {\em arXiv e-prints}, page arXiv:1602.05637, Feb 2016.

\bibitem[{Hag}07]{Hag07}
Fr{\'e}d{\'e}ric {Haglund}.
\newblock {Isometries of CAT(0) cube complexes are semi-simple}.
\newblock {\em arXiv e-prints}, page arXiv:0705.3386, May 2007.

\bibitem[Heu19]{Heu19}
Nicolaus Heuer.
\newblock Gaps in {SCL} for amalgamated free products and {RAAG}s.
\newblock {\em Geom. Funct. Anal.}, 29(1):198--237, 2019.

\bibitem[HW08]{HW08}
Fr\'{e}d\'{e}ric Haglund and Daniel~T. Wise.
\newblock Special cube complexes.
\newblock {\em Geom. Funct. Anal.}, 17(5):1551--1620, 2008.

\end{thebibliography}
\end{document}